\documentclass{amsart}
\usepackage{bbm}
\usepackage{bbding}
\usepackage[mathcal]{euscript}
\usepackage{graphicx}
\usepackage{amsthm}
\usepackage{calc}
\usepackage{mathrsfs,dsfont}
\usepackage{CJK,fancyhdr,amscd}
\usepackage{anysize} 
\usepackage{amsmath,amssymb,amsfonts}
\usepackage{epsfig,enumerate}
\usepackage{latexsym}
\usepackage{indentfirst, latexsym}
\usepackage{graphics}
\usepackage[all,poly,knot]{xy}
\usepackage[colorlinks,linkcolor=blue,anchorcolor=blue,citecolor=blue,pagebackref]{hyperref}
\usepackage{geometry}
\usepackage{amsmath,amssymb,amsthm,mathtools}

\numberwithin{equation}{section}
\newtheorem{theorem} {Theorem} [section]
\newtheorem{proposition}[theorem]{Proposition}
\newtheorem{corollary}  [theorem]     {Corollary}
\newtheorem{lemma}  [theorem]     {Lemma}

\theoremstyle{definition}

\renewcommand*\backref[1]{}
\renewcommand*\backrefalt[4]{ \ifcase #1 \or (cited on page #2) \else (cited on pages #2) \fi}

\DeclareMathOperator{\sech}{sech}

\newcommand{\dd}{\,\mathrm{d}}
\newcommand{\inner}[2]{\left\langle #1,#2\right\rangle}
\newcommand{\II}{\mathrm{II}}
\begin{document}
\title{On questions of Pogorelov and Toponogov} 
\author{Lei Ni}
\address{Lei Ni. School of Mathematical Sciences, Zhejiang Normal University, Jinhua, Zhejiang 321004, China  \  and \ Department of Mathematics, University of California, San Diego, La Jolla, CA 92093, USA}
\email{leni@zjnu.edu.cn; \ lni.math.ucsd@gmail.com}

\author{Wei Zhang} 
\address{Wei Zhang. School of Mathematical Sciences, Zhejiang Normal University, Jinhua, Zhejiang 321004, China }
\email{39900230@zjnu.edu.cn}

\author{Yijian Zhang}
\address{Yijian Zhang. School of Mathematical Sciences,  University of Science and Technology of China, Hefei, Anhui, China, 230026}
\email{zyj\_math@mail.ustc.edu.cn}
\date{}

\markleft{Ni, Zhang and Zhang}
\markright{Questions of Pogorelov and Toponogov}

\begin{abstract}
We give  explicit counterexamples to two questions. One is asked by Pogorelov and the other is by Toponogov, at least both are stated in \cite{Topo}. These questions concern the existence of closed asymptotic curves in a saddle surface, namely a complete immersed regular surface in $\mathbb{R}^3$ with nonpositive Gaussian/sectional curvature, and its geometric consequences under some topological conditions. We also modify the statements and prove a corrected version. In the appendix we include an example clarifying a conjecture of Milnor.
\end{abstract}

\subjclass[2020]{53A05 (primary), 53B25 (secondary)}
\keywords{Saddle surfaces;  Asymptotic curves; Gauss curvature; Milnor conjecture}

\maketitle

\tableofcontents

\section{Introduction}

This  paper concerns two questions on closed asymptotic curves in a saddle surface. Recall that a complete surface $\Sigma$ in $\mathbb{R}^3$ is called a saddle surface if its Gauss curvature $K\le 0$. By classical geometry of surfaces \cite{DoCarmo, Pogorelov} the assumption  $K\le 0$ (at a point) is necessary to ensure the  existence of an asymptotic direction, namely the tangent vector $V$ which annihilates the second fundamental form $\II(\cdot, \cdot)$.  A regular curve whose tangent vector is asymptotic everywhere, or equivalently whose normal curvature vanishes everywhere, is called an asymptotic curve.
The study of asymptotic curves is crucial in the proof of Hilbert's theorem asserting that there is no isometric immersion of $\mathbb{H}^2$ in $\mathbb{R}^3$ \cite{Hopf, DoCarmo}, as well as in Efimov's celebrated generalization. They are also equally useful in the recent progress \cite{Moore} on the impossibility of isometric immersion of $\mathbb{H}^n$ into $\mathbb{R}^{2n-1}$. 

  For surfaces, Pogorelov (Problem 2.7.5 on page 125  of \cite{Topo}) formulated the following statement on the geometric consequence of the existence of a closed asymptotic curve  together with some assumption on global topology and geometry:

{\it (Pogorelov) If on a saddle surface which is homeomorphic to a plane, there exists a closed asymptotic curve, then the region bounded by this curve must be a region on the plane, namely isometric to a region in a plane.}

In the mean time, Toponogov formulated a different statement (Problem 2.7.4 on page 125 of \cite{Topo}) for surfaces with $K<0$:

{\it (Toponogov) On a regular surface with negative Gauss curvature there are no closed asymptotic curves.}

We shall present counterexamples to both statements. In the mean time we shall prove that the second claim holds if one assumes that the surface is homeomorphic to a plane.

\begin{proposition}\label{prop:1}
On a regular surface with negative Gauss curvature, which is homeomorphic to a plane there are no closed asymptotic curves.
\end{proposition}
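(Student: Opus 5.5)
The plan is to turn the statement into an index computation for one of the two asymptotic line fields on a disk. Since $K<0$ everywhere, the second fundamental form is indefinite at every point of $\Sigma$. So at each point there are exactly two distinct asymptotic directions, and they depend smoothly on the point. This gives two transversal smooth line fields $L_1,L_2$ on $\Sigma$ with no singular points.

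Let $\gamma$ be a closed asymptotic curve, which is regular by definition. At each point, $\gamma'$ spans either $L_1$ or $L_2$. The set of parameters where $\gamma'\in L_1$ is closed, and so is the set where $\gamma'\in L_2$; they are disjoint because $L_1\neq L_2$ everywhere. By connectedness, $\gamma$ is an integral curve of a single field, say $L_1$.

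Next I reduce to a simple closed curve. Suppose $\gamma$ is not simple, and take the shortest parameter interval $[t_1,t_2]$ with $\gamma(t_1)=\gamma(t_2)$. Both tangent vectors at this point lie in the same line $L_1(\gamma(t_1))$. Uniqueness of integral curves of the line field (locally orientable) then shows that $\gamma|_{[t_1,t_2]}$ closes up smoothly, possibly after reversing orientation. Reversal is impossible, since it would force $\gamma$ to retrace itself and would contradict minimality. So we obtain a smooth simple closed integral curve $c$ of $L_1$.

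Because $\Sigma$ is homeomorphic to $\mathbb{R}^2$, the Jordan curve theorem (Schoenflies) shows that $c$ bounds a region $D\subset\Sigma$ homeomorphic to a closed disk. $D$ is simply connected, so $L_1|_D$ lifts to a nowhere-vanishing smooth vector field $X$ on $D$. By construction $X$ is tangent to $\partial D=c$.

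Now apply the Poincaré–Hopf theorem to a vector field tangent to the boundary. Either double $D$ to a sphere, or rotate $X$ by $\pi/2$ to make it outward pointing. Either way, the sum of indices of zeros of $X$ in $D$ equals $\chi(D)=1$. Since $X$ has no zeros, this is a contradiction, so no closed asymptotic curve exists.

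An equivalent route avoids Poincaré–Hopf. Use the Hopf Umlaufsatz together with Gauss–Bonnet, computing the rotation of the boundary tangent relative to a trivialization of $TD$ that is given by $X$ itself.

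The main obstacle I expect is the reduction to a simple smooth loop. It requires a careful uniqueness argument for integral curves of a line field at a self-intersection, and it must rule out $\gamma$ retracing itself with reversed orientation. The rest is standard topology, where the only point to check is that the plane hypothesis is what makes $c$ bound a disk.
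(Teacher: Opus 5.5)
Your proposal is correct and is essentially the paper's argument. The paper takes the \emph{other} asymptotic field $V_2$, which is transversal to $\gamma$, and gets a contradiction from its Brouwer-type Lemma~\ref{lemma:2}; you take the field tangent to $\gamma$ and invoke Poincar\'e--Hopf, which after your rotation by $\pi/2$ is the same statement, while your minimal-loop reduction to a simple closed curve fills a step the paper skips when it applies the Jordan theorem directly.
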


 The counterexample to the first statement in fact shows that in the above result one can not relax the negativity of the Gauss curvature to the nonpositivity even with $K=0$ on some curves.

Notation: We adopt the standard notations for regular surfaces as in \cite{Pogorelov, DoCarmo}. For $r(u, v)$ ($V(x, y)$ or $X(u, v))$ a parametrized surface, $E, F, G$ are data of the first fundamental form $ds^2=Edu^2+2F dudv +G dv^2$  (or $Edx^2+2Fdx dy+G dy^2$) and $L, M, N$ are the data of the second fundamental form. 

\section{First example}
Let
\[
  V(x,y) = \bigl(x,y,x^4-y^4\bigr), (x, y)\in \mathbb{R}^2
\]
be the parametrization of the graph of function $f(x, y)=x^4-y^4$. Denote this surface by $\Sigma$.
Then
\[
  \frac{\partial V}{\partial x}
    = \bigl(1,0,4x^3\bigr),
  \qquad
  \frac{\partial V}{\partial y}
    = \bigl(0,1,-4y^3\bigr).
\]
The unit normal of $\Sigma$  is
\[
  n
  =
  \frac{\bigl(-4x^3,4y^3,1\bigr)}
       {\sqrt{16x^6+16y^6+1}} .
\]
Hence the first fundamental form expressed in terms of the matrix is
\[
  I
  =
  \begin{pmatrix}
    1+16x^6 & -16x^3y^3 \\
    -16x^3y^3 & 1+16y^6
  \end{pmatrix}.
\]
Furthermore,
\[
  \frac{\partial^2 V}{\partial x^2}
    = \bigl(0,0,12x^2\bigr),
  \qquad
  \frac{\partial^2 V}{\partial y^2}
    = \bigl(0,0,-12y^2\bigr),
  \qquad
  \frac{\partial^2 V}{\partial x\,\partial y}
    = \bigl(0,0,0\bigr).
\]
Thus
\[
  L
  =
  \inner{\frac{\partial^2 V}{\partial x^2}}{n}
  =
  \frac{12x^2}{\sqrt{16x^6+16y^6+1}},
\]
\[
  M
  =
  \inner{\frac{\partial^2 V}{\partial x\,\partial y}}{n}
  =
  0,
\]
and
\[
  N
  =
  \inner{\frac{\partial^2 V}{\partial y^2}}{n}
  =
  \frac{-12y^2}{\sqrt{16x^6+16y^6+1}} .
\]
Therefore
\[
  II
  =
  \begin{pmatrix}
    \dfrac{12x^2}{\sqrt{16x^6+16y^6+1}} & 0 \\
    0 & \dfrac{-12y^2}{\sqrt{16x^6+16y^6+1}}
  \end{pmatrix}.
\]
It follows that the Gaussian curvature is
\[
  K
  =
  \frac{-144x^2y^2}
       {\bigl(16x^6+16y^6+1\bigr)^2}.
\]
The asymptotic curve equation is
\[
  L(\dd x)^2 + 2M\dd x\,\dd y + N(\dd y)^2 = 0.
\]
Equivalently,
\[
  x^2(\dd x)^2 - y^2(\dd y)^2 = 0.
\]

Now let
\[
  \gamma_a(t)
  =
  \bigl(a\cos t,a\sin t, a^4(\cos^4 t-\sin^4 t)\bigr), \quad a>0, 0\le t< 2\pi.
\]
Then
\[
  \dd x = -a \sin t\,\dd t,
  \qquad
  \dd y = a \cos t\,\dd t.
\]
Therefore
\[
  x^2(\dd x)^2-y^2(\dd y)^2
  =
 a^4( \cos^2 t \sin^2 t\,(\dd t)^2
  -
  \sin^2 t \cos^2 t\,(\dd t)^2)
  =
  0.
\]

Now we have constructed a family of closed asymptotic curves in $\Sigma$. Since it is a graph over the whole plane  $\Sigma$ is topologically $\mathbb{R}^2$.  Clearly the enclosed region of the surface is the graph restricted on the disc of radius $a$. The Gauss curvature calculation shows that it is not flat. The Gauss curvature of $\Sigma$ is negative except on the coordinate axes $x=0$ or $y=0$.

\begin{theorem}\label{thm:1} There exists a smooth saddle surface which is diffeomorphic to the plane with $K\le 0$ such that there are infinitely many closed asymptotic curves each enclosing a non-planar region.
\end{theorem}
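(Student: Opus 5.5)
The plan is to take the surface $\Sigma$ of the first example, the graph of $f(x,y)=x^4-y^4$, and to check each clause of Theorem~\ref{thm:1} from the computations already done for it. \emph{Smoothness and topology:} $V(x,y)=(x,y,f(x,y))$ is a smooth embedding of $\mathbb{R}^2$, since $f$ is a polynomial and $\partial_xV\times\partial_yV=(-4x^3,4y^3,1)\neq 0$ everywhere. The projection $(x,y,z)\mapsto(x,y)$ is a smooth inverse on $\Sigma$, so $\Sigma$ is diffeomorphic to the plane. \emph{Completeness:} the induced metric dominates the Euclidean metric on the $(x,y)$-plane, because $E\,dx^2+2F\,dx\,dy+G\,dy^2=dx^2+dy^2+(df)^2$. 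Hence every Cauchy sequence on $\Sigma$ projects to a Cauchy sequence in $\mathbb{R}^2$ and converges; alternatively, $\Sigma$ is a properly embedded closed subset of $\mathbb{R}^3$. \emph{Saddle condition:} the formula $K=-144x^2y^2/(16x^6+16y^6+1)^2\le 0$ gives $K\le 0$.

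\emph{Infinitely many closed asymptotic curves.} For each $a>0$, $\gamma_a$ is a regular closed curve: its projection $(a\cos t,a\sin t)$ has nonzero velocity, so $\gamma_a'(t)\neq 0$. The computation preceding the theorem shows $x^2(dx)^2-y^2(dy)^2=0$ along $\gamma_a$. Because $M=0$ and $L\,dx^2+N\,dy^2$ is a positive multiple of $x^2dx^2-y^2dy^2$, this gives $\II(\gamma_a',\gamma_a')=0$, so the normal curvature vanishes identically and $\gamma_a$ is asymptotic. Distinct $a$ give disjoint curves, since their projections are distinct circles, so there are infinitely many.

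\emph{The enclosed region is non-planar.} The curve $\gamma_a$ is a simple closed curve in $\Sigma\cong\mathbb{R}^2$, so by the Jordan curve theorem it bounds the compact disc $D_a=V(\{x^2+y^2\le a^2\})$. This disc contains points with $xy\neq 0$, such as $(a/2,a/2)$, where $K<0$. Since $K$ is intrinsic (Theorema Egregium), $D_a$ cannot be isometric to a planar region. If ``non-planar'' is meant extrinsically instead, the same conclusion follows because $\II\neq0$ there.

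There is no real obstacle here. The only step that needs some care is making sure the asymptotic condition is applied with the correct $L,M,N$, including the positive normalizing factor, and that regularity of $\gamma_a$ holds at the points where $\sin t\cos t=0$. There both $\gamma_a'$ is nonzero and the asymptotic identity still holds, so the argument goes through.
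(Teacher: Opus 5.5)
Your proposal is correct and follows the paper's own proof: you use the graph of $f(x,y)=x^4-y^4$, check that each $\gamma_a$ satisfies $x^2(dx)^2-y^2(dy)^2=0$, and read off from $K=-144x^2y^2/(16x^6+16y^6+1)^2$ that the enclosed disc is not flat. The extra points you spell out are all accurate and just make explicit what the paper leaves implicit: completeness via $I=dx^2+dy^2+(df)^2$, regularity of $\gamma_a$ including where $\sin t\cos t=0$, the positive factor relating $\mathrm{II}$ to $x^2dx^2-y^2dy^2$, and intrinsic non-flatness via the Theorema Egregium.
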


The above construction can be put in a more general formulation as below.

For a function graph
  $V(x,y)=\bigl(x,y,f(x,y)\bigr)$
over the $(x,y)$-plane, let
\[
  D=1+\Big(\frac{\partial{f}}{\partial{x}}\Big)^2+\Big(\frac{\partial{f}}{\partial{y}}\Big)^2,
\]
then the determinant of the first fundamental form is $EG-F^2=D$, and

\[
  L=\frac{1}{\sqrt D}\frac{\partial^2 f}{\partial x^2} ,\qquad
  M=\frac{1}{\sqrt D}\frac{\partial^2 f}{\partial x\partial y} ,\qquad
  N=\frac{1}{\sqrt D}\frac{\partial^2 f}{\partial y^2} ,
\]
hence
\[
  K=\frac{LN-M^2}{EG-F^2}=\frac{f_{xx}f_{yy}-f_{xy}^2}{D^2}.
\]
In particular, the sign of $K$ coincides with the sign of the determinant of the
Hess$(f)$.

We now write $f$ in polar coordinates, i.e. $(x,y)=(r\cos\theta,r\sin\theta)$,
and consider a homogeneous function
\[
  f(r,\theta)=r^m\phi(\theta),
\]
where $m>1$ and $\phi$ is a smooth $2\pi$-periodic function. Under the
coordinate change from $(x,y)$ to $(r,\theta)$, $D=1+f_r^2+\frac{1}{r^2}f_\theta^2$  and the  quadratic form defined by the Hessian becomes
\[
\begin{aligned}
  f_{xx}(\dd x)^2+2f_{xy}\dd x\dd y+f_{yy}(\dd y)^2
  &=
  f_{rr}(\dd r)^2
  +2\left(f_{r\theta}-\frac1r f_\theta\right)\dd r\dd\theta 
  +\left(f_{\theta\theta}+r f_r\right)(\dd\theta)^2.
\end{aligned}
\]
Thus in polar coordinates 
$(r,\theta)$, the second fundamental form matrix is
\[
  \II
  = \big(1+f_r^2+\frac{1}{r^2}f_\theta^2\big)^{-\frac12}
  \begin{pmatrix}
    f_{rr} & f_{r\theta}-\frac1r f_\theta \\
    f_{r\theta}-\frac1r f_\theta & f_{\theta\theta}+r f_r
  \end{pmatrix}.
\]
For $f(r,\theta)=r^m\phi(\theta)$ this gives
\[
  \II
  =\big(1+r^{2m-2}\left(m^2\phi(\theta)^2+\bigl(\phi'(\theta)\bigr)^2\right)\big)^{-\frac12}
  \begin{pmatrix}
    m(m-1)r^{m-2}\phi(\theta)
    &
    (m-1)r^{m-1}\phi'(\theta)
    \\
    (m-1)r^{m-1}\phi'(\theta)
    &
    r^m\bigl(\phi''(\theta)+m\phi(\theta)\bigr)
  \end{pmatrix}.
\]
Therefore the Gauss curvature of the graph is
\[
\begin{aligned}
  K(r,\theta)
  &=
  \frac{(m-1)r^{2m-4}
  \left[
    m\phi(\theta)\bigl(\phi''(\theta)+m\phi(\theta)\bigr)
    -(m-1)\bigl(\phi'(\theta)\bigr)^2
  \right]}
  {
  \left[
    1+r^{2m-2}\left(m^2\phi(\theta)^2+\bigl(\phi'(\theta)\bigr)^2\right)
  \right]^2
  }.
\end{aligned}
\]

Now we require the closed circle $\{r=c\}$ to be asymptotic.
The tangent direction of such a circle is $\frac{\partial}{\partial\theta}$,
so the asymptotic condition is, in view of the second fundamental form:
\[
  \II\left(\frac{\partial}{\partial\theta},
           \frac{\partial}{\partial\theta}\right)=0,
\]
or equivalently, $f_{\theta\theta}+r f_r=0$.
For $f(r,\theta)=r^m\phi(\theta)$, this becomes $\phi''(\theta)+m\phi(\theta)=0$. In this case,
\[
  K(r,\theta)
  =
  -
  \frac{(m-1)^2r^{2m-4}\bigl(\phi'(\theta)\bigr)^2}
  {
  \left[
    1+r^{2m-2}\left(m^2\phi(\theta)^2+\bigl(\phi'(\theta)\bigr)^2\right)
  \right]^2
  }
  \leq 0.
\]
Consequently, the graph has nonpositive Gaussian curvature.

To obtain a globally well-defined smooth function on the plane, take $m=k^2, k\in\mathbb Z, k\geq 2$, and choose $\phi(\theta)=\cos(k\theta)$. Although the general solution of $\phi''+k^2\phi=0$ is
\[
  A\cos(k\theta)+B\sin(k\theta),
\]
it suffices to consider the cosine case, after rotating the $(x,y)$-axes.

Thus the functions
\[
  f_k(r,\theta)=r^{k^2}\cos(k\theta), k\in\mathbb Z, k\geq 2, 
\]
provide a family of saddle graphs. Returning to Cartesian coordinates,
we obtain
\[
  f_k(x,y)
  =
  (x^2+y^2)^{\frac{k^2-k}{2}}\,
  \operatorname{Re}\bigl((x+iy)^k\bigr).
\]
This is a polynomial, since $k^2-k$ is even. In particular, when $k=2$,
\[
  f_2(x,y)
  =
  (x^2+y^2)\operatorname{Re}\bigl((x+iy)^2\bigr)
  =
  (x^2+y^2)(x^2-y^2)
  =
  x^4-y^4
\]
gives the previous explicit counterexample to Pogorelov's question.

\section{The proof of Proposition \ref{prop:1} and outline of the second construction}

To prove the proposition we need the following well-known result related to Brouwer's fixed point theorem. 

\begin{lemma}\label{lemma:1} Let $D$ be a disk in $\mathbb{R}^n$. Then there is no continuous map
$$
w: D \to \partial{D} 
$$
such that $w(x)=x$ on $\partial D$.
\end{lemma}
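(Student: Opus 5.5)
We derive Lemma \ref{lemma:1} from the Brouwer fixed point theorem: every continuous self-map of a closed disk $D\subset\mathbb{R}^n$ has a fixed point. We may take $D$ to be the closed unit ball $\bar B=\{x:|x|\le 1\}$, so that $\partial D=S^{n-1}$. A disk in the sense of the statement is homeomorphic to $\bar B$ by a homeomorphism carrying $\partial D$ to $S^{n-1}$, and a retraction transports through such a homeomorphism.

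Suppose, for contradiction, that $w:\bar B\to S^{n-1}$ is continuous with $w(x)=x$ for $|x|=1$. Define $g:\bar B\to\bar B$ by $g(x)=-w(x)$. This map is continuous, and its image lies in $S^{n-1}\subset \bar B$. By Brouwer's theorem there is a point $x_0$ with $g(x_0)=x_0$.

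We locate $x_0$ in two steps.
\begin{itemize}
\item Since $x_0=-w(x_0)$ and $|w(x_0)|=1$, we get $|x_0|=1$, so $x_0\in\partial D$.
\item On $\partial D$ we have $w(x_0)=x_0$, so the fixed-point equation becomes $x_0=-x_0$. This forces $x_0=0$, contradicting $|x_0|=1$.
\end{itemize}
Hence no such retraction $w$ exists.

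Alternatively, one can prove the lemma without invoking Brouwer, which avoids any circularity if Brouwer's theorem is itself to be derived from the lemma. The tool is the homotopy $H(x,t)=w(tx)$ for $t\in[0,1]$. It is a homotopy in $S^{n-1}$ between the constant map $x\mapsto w(0)$ and the identity of $S^{n-1}$. This would make $S^{n-1}$ contractible, contradicting $\tilde H_{n-1}(S^{n-1})\cong\mathbb{Z}$ for $n\ge 2$. The case $n=1$ is separate: there $\partial D$ consists of two points, and the connected interval $D$ cannot map continuously onto both.

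The only nontrivial input in either route is a standard topological fact, namely Brouwer's theorem or the non-contractibility of the sphere. The rest is immediate, so there is no real obstacle.
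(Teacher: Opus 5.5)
Your proposal is correct. The antipodal trick $g=-w$ plus Brouwer's theorem gives the contradiction at once. Your second route is also sound: the homotopy $H(x,t)=w(tx)$ would make $\mathrm{id}_{S^{n-1}}$ null-homotopic, and $n=1$ is settled by connectedness.

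The paper gives no proof of its own. It points to Theorem 3 of Section 8.1 of Evans, whose argument runs in the opposite logical direction from your first route. There one uses that the Jacobian determinant is a null Lagrangian. A retraction $w$ that coincides with the identity on $\partial U$ therefore satisfies $\int_U \det Dw\,dx=\int_U \det D(\mathrm{id})\,dx=|U|\neq 0$. On the other hand, differentiating $|w|^2\equiv 1$ gives $(Dw)^T w=0$ with $w\neq 0$, so $\det Dw\equiv 0$. Brouwer's theorem is derived only afterwards, from this non-retraction result.

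That proof is analytic and self-contained, essentially an integration by parts. However, it needs $w$ to be differentiable, and Evans states it for smooth $w$. The lemma is stated for continuous $w$, and the map $W$ built from the continuous field $V$ in the proof of Lemma 2 is only continuous.

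Your first route supplies exactly this missing step, without circularity. Evans obtains Brouwer for continuous maps from the smooth non-retraction by mollification, and your $g=-w$ argument then upgrades this to the continuous non-retraction. Your second route handles continuous $w$ directly and bypasses Brouwer entirely, at the cost of importing the homology of spheres.
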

See for example the proof of Theorem 3 of Section 8.1 of \cite{Evans}.

\begin{lemma}\label{lemma:2} If $V(x)$ is a continuous vector field defined in a neighborhood of $D$. If $V$ is not tangential to $\partial D$ for any $x\in \partial D$ $($in particular $V(x)\ne 0$ for $x\in \partial D$$)$, then $V$ must have a zero inside $D$.
\end{lemma}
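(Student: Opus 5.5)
We argue by contradiction and reduce the claim to Lemma \ref{lemma:1}. Suppose $V$ is continuous on a neighborhood of the closed disk $D$, never tangent to $\partial D$ at boundary points, and has no zero in $D$. We will build a retraction $D\to\partial D$ fixing $\partial D$, which Lemma \ref{lemma:1} forbids.

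To build the retraction, first normalize and fix the direction of $V$ on the boundary. Take $D$ to be the closed unit ball centred at $0$, with outward normal $\nu(x)=x$ on $\partial D$. We read ``not tangential'' as $\langle V(x),x\rangle\neq 0$ for every $x\in\partial D$. For $n\ge 2$ the sphere $\partial D$ is connected, so the continuous function $\langle V(x),x\rangle$ has a constant sign on it. Replacing $V$ by $-V$ if necessary, we may assume $\langle V(x),x\rangle>0$ on $\partial D$, so $V$ points strictly outward everywhere. (For $n=1$ the statement is the intermediate value theorem in the case where $V$ has opposite signs at the two endpoints.)

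Next, homotope $V$ to the identity on the boundary. Set $W_s(x)=(1-s)V(x)+s\,x$ for $x\in\partial D$ and $s\in[0,1]$. Then
\[
\langle W_s(x),x\rangle=(1-s)\langle V(x),x\rangle+s>0,
\]
so $W_s$ never vanishes. Now define $w:D\to\partial D$ by
\[
w(x)=\begin{cases}\dfrac{V(2x)}{|V(2x)|}, & |x|\le \tfrac12,\\[2mm] \dfrac{W_{2|x|-1}(x/|x|)}{|W_{2|x|-1}(x/|x|)|}, & \tfrac12\le |x|\le 1.\end{cases}
\]
The two formulas agree on $|x|=\tfrac12$, because $W_0=V$ there. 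Each piece is continuous, since $V$ has no zero in $D$ and $W_s$ has none on $\partial D$. On $|x|=1$ we get $w(x)=W_1(x)/|W_1(x)|=x$. Thus $w$ is a continuous map $D\to\partial D$ restricting to the identity on $\partial D$, contradicting Lemma \ref{lemma:1}. Hence $V$ has a zero in $D$.

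The main obstacle is the sign step: the hypothesis only says $V$ is transverse to the boundary, not that it points consistently outward, and it is the connectedness of $\partial D$ that upgrades transversality to a uniform direction. Once that is in place, the straight-line homotopy stays away from zero and the rest is routine. If $D$ is only a topological disk (a homeomorphic image of the ball), as happens when the lemma is applied to the region enclosed by a closed curve on a surface, one first transports everything to the model ball. This requires a notion of ``non-tangential'' that survives the homeomorphism. In the intended application, a smooth Jordan domain in a coordinate plane, I would interpret non-tangency as $V$ having a nonzero component along the normal to the boundary curve. Then the same construction works with $x$ replaced by the outward normal field, extended by a collar neighborhood of the boundary.
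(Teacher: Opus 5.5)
Your proof is correct and follows the paper's route. You argue by contradiction, normalize $V$, arrange $\langle V(x),x\rangle>0$ on $\partial D$, and interpolate linearly between $V/|V|$ and the position vector near the boundary to build a retraction $D\to\partial D$, which contradicts Lemma~\ref{lemma:1}. Your explicit annular construction, together with the connectedness argument for the sign (including the caveat that for $n=1$ the two endpoints must point consistently outward or inward), spells out what the paper's $\rho$/$t$ deformation and its ``without loss of generality'' leave implicit.
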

\begin{proof} Without the loss of generality we assume $D$ is the unit ball.
Assume that $V$ has no zero inside $D$. Then $V(x)\ne 0$ for $x\in \overline{D}$. Now consider $W_0(x)=V(x)/|V(x)|$. Then $W_0: D \to \partial D$. The assumption implies, without the loss of generality, that
$$
\langle W_0(x), x\rangle > 0, \forall x\in \partial D.
$$
Now one can construct the continuous deformation
$$
w(x,t)=\frac{t\rho(x)x+(1-t)W_0(x)}{|t\rho(x)x+(1-t) W_0(x)|} 
$$
with $\rho(x)=1$ on $\partial D$ and decreases to zero as $x$ is away from the boundary to obtain a continuous map $W$ satisfying Lemma \ref{lemma:1}. 
\end{proof}

Now we can prove the proposition. Assume that there exists a closed asymptotic curve $\gamma$, then it encloses a disc $D$ by Jordan's theorem. Along the curve $\gamma(t)$ one may find the second asymptotic unit vector  $V(x)$ which is transversal to $\gamma(t)$, namely $V(x)$ never parallel to $\gamma'(t)$. This is contradictory with Lemma \ref{lemma:2}. To be precise we can construct $V$ as below.
Fix a point $o$ inside the disk. For a local coordinate near $o$ we may choose a local coordinate $(u, v)$ so that $r_u$ and $r_v$ are the principal directions with the principal curvature  $k_1>0>k_2$. For $e_1=\frac{1}{\sqrt{E}}r_u$ and $e_2=\frac{1}{\sqrt{G}}r_v$, if $e_\theta=e_1\cos \theta+e_2 \sin \theta$, the normal curvature of it is 
$$
k_\theta=\cos^2 \theta k_1 +\sin^2 \theta k_2
$$
by Euler's formula. Hence for $\theta_0=\arctan(\sqrt{\frac{-k_1}{k_2}})$ with $0< \theta_0<\frac{\pi}{2}$, $e_{\theta_0}$ is a unit asymptotic direction. The second one is $e_{\pi-\theta_0}$. Let $p$ be any other point, choose a path $\tau(t)$ joining $o$ to $p$. Repeating the construction along the path gives a pair of smooth unit asymptotic vectors and a pair of unit principal vectors along $\tau$ in each a local neighborhoods intersecting  with $\tau(t)$, and finally at $p$. Given that there exist two distinct asymptotic directions at each point,   and two principal directions (one corresponds to the positive principal curvature, the other to the negative), the simply-connectedness of $\Sigma$ ensures that there exists a pair of well-chosen smooth unit asymptotic vectors $V_1(t)$ and $V_2(t)$ at each point (as well as a pair of well-chosen unit principal vectors $V_3, V_4$). Since $\gamma'(t)$ is a smooth asymptotic vector along $\gamma$,  we have that $\gamma'(t)$ is parallel with one of $V_i$. Assume that $\gamma'(t)=|\gamma'(t)|V_1(t)$. Then $V_2(t)$ is not parallel with $\gamma'(t)$ along $\gamma$. We may choose $V(x)$ to be $V_2$.
  One can also choose $V$ to be one of $V_3, V_4$.

An obstruction of the above argument is that at the point where $K=0$, one principal direction coincides with the asymptotic direction and the obstruction arises if there is a planar umbilical point where no definite way to define the smooth $V(x)$ as the previous example shows. However, the proof works if there is no umbilical point on the surface even when $K\le 0$ since the assumption allows one to choose a smooth unit principal vector which transversal to the closed asymptotic curve.

\begin{corollary}\label{coro:1}
On a regular surface with  Gauss curvature $K\le 0$, which is homeomorphic to a plane, if we assume that there is no umbilical point, then there are no closed asymptotic curves.
\end{corollary}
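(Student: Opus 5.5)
We argue by contradiction, following the proof of Proposition \ref{prop:1} but replacing the asymptotic frame by a principal frame. Suppose $\gamma$ is a closed asymptotic curve on $\Sigma$. Since $\Sigma$ is homeomorphic to $\mathbb{R}^2$, the Jordan curve theorem (applied through the homeomorphism) gives that $\gamma$ bounds a topological disc $D\subset\Sigma$. The aim is to build a continuous nonvanishing tangent vector field $V$ on a neighborhood of $\overline{D}$ that is nowhere tangent to $\gamma$. After transporting $D$ to the unit disc by a homeomorphism, this contradicts Lemma \ref{lemma:2}.

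\textbf{Principal line field.} Because there are no umbilical points, $k_1>k_2$ everywhere. Hence the two principal eigenspaces of the shape operator are one-dimensional and depend smoothly on the point, giving two smooth line fields $\ell_1,\ell_2$. On the simply connected surface $\Sigma$, a line field lifts to a smooth unit vector field. The obstruction to lifting is a class in $H^1(\Sigma;\mathbb{Z}/2)$, which vanishes. Equivalently, one can transport an orientation of $\ell_1$ along paths from a base point $o$, exactly as in the proof of Proposition \ref{prop:1}; simple connectivity makes the result path independent. This gives a smooth unit field $V_3$ spanning $\ell_1$, the direction of the larger principal curvature $k_1$.

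\textbf{Transversality along $\gamma$.} Take $V=V_3$. We need $V_3$ never parallel to $\gamma'$. Suppose instead that $\gamma'(t)\parallel V_3$ at some point. Then the normal curvature of $\gamma$ there is $k_1$, and since $\gamma$ is asymptotic, $k_1=0$. Because $K=k_1k_2\le 0$ and $k_1> k_2$, this forces $k_2<0=k_1$. That is consistent so far, so it is not yet a contradiction.

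The fix is to use Euler's formula: an asymptotic direction $e_\theta$ satisfies $\cos^2\theta\,k_1+\sin^2\theta\,k_2=0$. If $k_1=0$, then $\sin^2\theta\,k_2=0$ with $k_2<0$, so $\theta=0$, and the asymptotic direction is exactly $\ell_1$. The same reasoning applies to $\ell_2$ whenever $k_2=0$. Thus tangency of $\gamma$ to one principal line can only occur where the corresponding principal curvature vanishes, and at such a point $\gamma'$ lies in that line and is transverse to the other.

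The plan is therefore to choose, at each point of $\gamma$, the principal direction not containing $\gamma'$. Since $k_1>k_2$ excludes $k_1=k_2=0$, at every point $\gamma'$ lies in at most one of $\ell_1,\ell_2$. Still, we need a single global field, not a pointwise choice. The cleanest route is the mod-2 rotation count. Compare $\gamma'$ with the globally defined orientable frame $(V_3,V_4)$ on $\overline{D}$. The angle of $\gamma'$ measured against $V_3$ is $\theta(t)$ with $\tan^2\theta=-k_1/k_2$, defined and continuous along $\gamma$, and it takes values in $[0,\pi/2]$ up to the reflections $\theta\mapsto\pm\theta+\pi\mathbb{Z}$.

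\textbf{Main obstacle.} The key step, and the hardest, is showing that $\gamma'$ has rotation index $0$ relative to the frame $(V_3,V_4)$, while the Hopf Umlaufsatz (in the trivialization of $T\Sigma|_{\overline D}$ given by $V_3,V_4$) forces rotation index $\pm1$. Continuity of $\theta(t)$ together with the constraint $|\tan\theta|=\sqrt{-k_1/k_2}$ confines $\theta$ to a single quadrant pattern, so its net change is $0$. This gives the contradiction.

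What must be checked carefully is that $\theta$ cannot wind by passing through $0$ or $\pi/2$. At those angles only one of $k_1,k_2$ vanishes, and at such points both asymptotic directions merge into one principal line. Since $\theta(t)\in\{\pm\theta_0+\pi\mathbb{Z}\}$ with $\theta_0\in[0,\pi/2]$, the total variation over a loop is a multiple of $\pi$ that is bounded by continuity. Ruling out an accumulated winding of $2\pi$ is the delicate point; I would handle it by a local analysis near zeros of $k_1$ or $k_2$.
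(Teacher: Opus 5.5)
\textbf{There is a genuine gap, and it cannot be filled.} It sits in your ``Main obstacle'' step. You claim that continuity of $\theta$ together with $|\tan\theta|=\sqrt{-k_1/k_2}$ keeps $\theta$ in a single quadrant pattern. This is asserted, not proved, and it is false. At a point of $\gamma$ where $k_2=0$ one has $\theta\in\frac{\pi}{2}+\pi\mathbb{Z}$, and $\theta$ can pass continuously from $\theta_0$ to $\pi-\theta_0$ there. Likewise $\theta$ can pass through $\pi\mathbb{Z}$ where $k_1=0$.

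The asymptotic circles $r=a$ on the paper's surface $z=x^4-y^4$ do exactly this. At $(\pm a,0)$ the tangent is the principal direction $\partial_y$ with $k_2=0$. At $(0,\pm a)$ it is $\partial_x$ with $k_1=0$. These crossings happen in umbilic-free neighbourhoods of the curve, so no local analysis near zeros of $k_1$ or $k_2$ can exclude the winding. You did correctly spot that the paper's one-line justification (that one may choose a principal unit field transversal to $\gamma$) has the same defect: $V_3$ is tangent to $\gamma$ wherever $k_1=0$ on $\gamma$, and $V_4$ wherever $k_2=0$.

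\textbf{The corollary is false as stated.} Let $\eta\ge0$ be smooth with support in $[\frac14,\frac34]$ and $\int\eta=1$, and set $\tilde F(\rho)=\rho+\int_\rho^\infty(t-\rho)\eta(t)\,dt$. Then:
\begin{itemize}
\item $\tilde F''=\eta$ and $0\le\tilde F'\le1$;
\item $\tilde F\equiv\alpha:=\int t\eta(t)\,dt>0$ for $\rho\le\frac14$;
\item $\tilde F(\rho)=\rho$ for $\rho\ge\frac34$.
\end{itemize}
Consider the graph of $f(x,y)=\tilde F(x^2+y^2)(x^2-y^2)$, i.e.\ $f=F(r)\cos2\theta$ with $F(r)=r^2\tilde F(r^2)$. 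With $\rho=r^2$, the Hessian of $f$ in the orthonormal frame $\partial_r,\ r^{-1}\partial_\theta$ has entries
\[
F''\cos2\theta,\qquad -2\,(F/r)'\sin2\theta,\qquad r^{-2}(rF'-4F)\cos2\theta .
\]
Here $F''=2\tilde F+10\rho\tilde F'+4\rho^2\tilde F''>0$ and $(F/r)'=\tilde F+2\rho\tilde F'>0$. Also $r^{-2}(rF'-4F)=2(\rho\tilde F'-\tilde F)\le0$, since this function has derivative $\rho\tilde F''\ge0$ and vanishes for $\rho\ge\frac34$. Hence $\det\mathrm{Hess}f\le0$ for $r>0$. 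Also $\mathrm{Hess}f\ne0$ for $r>0$: if $\cos2\theta=0$, the off-diagonal entry is nonzero. Near the origin $f=\alpha(x^2-y^2)$.

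So the graph is a complete surface homeomorphic to the plane, with $K\le0$ and no umbilical points; when $K\le0$ an umbilic is a planar point, i.e.\ $\mathrm{Hess}f=0$. Yet for $r\ge1$ the surface coincides with $z=x^4-y^4$, so the circles $r=a\ge1$ are closed asymptotic curves. On this surface the principal frame $(V_3,V_4)$ is global, so by the Umlaufsatz $\theta$ changes by $\pm2\pi$ along each such circle. That is exactly the winding you hoped to rule out.

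\textbf{What survives.} The argument works under an extra hypothesis such as $k_1\ne0$ (or $k_2\ne0$) along $\gamma$. Then $V_3$ (or $V_4$) is transverse to $\gamma$, and Lemma~\ref{lemma:2} gives the contradiction.
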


 In the proof it is clear that one needs the simply-connectedness to construct a well-defined smooth unit vector transversal to $\gamma(t)$. Hence the second example will have nontrivial fundamental group.

  The construction starts from a closed space curve with non-vanishing curvature and non-vanishing torsion.  Taking the ruled surface generated by the principal normal lines of this curve produces a smooth immersion
\[
    X:S^1_{L_0}\times \mathbb R\longrightarrow \mathbb R^3,
    \qquad
    X(s,u)=\gamma(s)+uN_0(s),
\]
where $s$ is the arc-length parameter, $N_0$ is the principal normal of the curve, and $L_0$ is the length of the closed curve.  We verify directly that the induced metric is complete, that the Gauss curvature is strictly negative everywhere, and that the central curve $X(s,0)=\gamma(s)$ is a closed asymptotic curve.  Thus completeness plus the negativity of Gauss curvature  do not rule out closed asymptotic curves on saddle surfaces.

Here an asymptotic curve means a regular curve on the surface whose tangent direction has zero normal curvature.  Equivalently, if $\alpha$ is a curve on the surface, then $\alpha$ is asymptotic when
\[
    \II(\dot\alpha,\dot\alpha)=0
\]
along the curve.
We prove the following stronger counterexample to Toponogov's statement.

\begin{theorem}\label{thm:2}
There exists a complete smooth immersion
\[
    X:S^1_{L_0}\times \mathbb R\longrightarrow \mathbb R^3
\]
whose Gauss curvature satisfies $K<0$ everywhere and whose image contains a closed asymptotic curve.
\end{theorem}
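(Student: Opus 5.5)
The plan is to carry out exactly the ruled-surface construction outlined before the statement. I would first produce a closed curve $\gamma$ with curvature $\kappa>0$ and torsion $\tau\neq 0$ everywhere. Then I would compute the fundamental forms of $X(s,u)=\gamma(s)+uN_0(s)$ using the Frenet equations $T'=\kappa N_0$, $N_0'=-\kappa T+\tau B$, $B'=-\tau N_0$.

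\textbf{Regularity and first fundamental form.} The partial derivatives are
\[
X_s=(1-u\kappa)T+u\tau B,\qquad X_u=N_0 ,
\]
so
\[
X_s\times X_u=(1-u\kappa)B-u\tau T .
\]
This vector vanishes only if $u\tau=0$ and $1-u\kappa=0$. Since $\tau\neq 0$, the first condition forces $u=0$, and then the second reads $1=0$, which is impossible. Hence $X$ is an immersion. Its first fundamental form is $E=W:=(1-u\kappa)^2+u^2\tau^2$, $F=0$, $G=1$.

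\textbf{Second fundamental form and curvature.} Because $X_{uu}=0$, we get $N=0$. Using $X_{su}=N_0'$,
\[
M=\frac{\langle -\kappa T+\tau B,\,(1-u\kappa)B-u\tau T\rangle}{\sqrt W}=\frac{\tau}{\sqrt W}.
\]
Therefore
\[
K=\frac{LN-M^2}{EG-F^2}=-\frac{\tau^2}{W^2}<0
\]
everywhere, with no need to compute $L$ in general.

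\textbf{The closed asymptotic curve.} Along $u=0$ we have $X_{ss}=\kappa N_0$, and the unit normal there is $B$. Hence $L=\langle\kappa N_0,B\rangle=0$, so $\II(X_s,X_s)=0$ and $X(s,0)=\gamma(s)$ is a closed asymptotic curve.

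\textbf{Completeness.} $W$ is continuous and positive on $S^1_{L_0}\times\mathbb R$, and $W\ge u^2\min\tau^2$ for large $|u|$. By compactness of $S^1$ there is $c>0$ with $W\ge c$. Then the induced metric satisfies
\[
ds^2_X\ge c\,ds^2+du^2 ,
\]
which is a complete flat cylinder metric. A metric bounded below by a complete metric is complete: Cauchy sequences stay in bounded sets of the cylinder, and these are compact. So $X$ is complete.

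\textbf{Main obstacle: the curve.} The real difficulty is exhibiting a smooth closed curve with $\kappa>0$ and $\tau\neq0$ everywhere, since many simple closed curves have torsion changing sign. I would take a torus-knot-type curve that winds rapidly around a circle:
\[
\gamma(t)=\bigl((a+b\cos qt)\cos t,\ (a+b\cos qt)\sin t,\ b\sin qt\bigr),
\]
with $q$ large and $b$ small. At scale $1/q$ this curve looks like a circular helix of radius $b$ and pitch about $2\pi a/q$. The key quantity is $\det(\gamma',\gamma'',\gamma''')$. I would expand it in powers of $q$ and show that its leading term is the helix's constant-sign term, of order $a b^2 q^{3}$, and that it dominates all remaining terms uniformly in $t$. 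Likewise, $|\gamma'\times\gamma''|$ is dominated by the helix term $b q^2\cdot aq\neq0$, which gives $\kappa>0$.

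If this estimate became messy, my fallback would be to cite a known existence result for closed curves of nonvanishing (even constant) torsion, such as Costa's examples. Only the existence of such a curve is needed, and the remaining steps are the short computation above.
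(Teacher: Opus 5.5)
Your proposal matches the paper's proof essentially step for step: the same principal-normal ruled surface $X=\gamma+uN_0$, the same computation $M=\tau/\sqrt W$ and $K=-\tau^2/W^2$, the same closed asymptotic curve $u=0$, and the same comparison $I\ge m\,ds^2+du^2$ for completeness. For the curve, the paper simply takes $a=5$, $b=1$, $q=3$ in your torus-knot family and checks $\det(\Gamma',\Gamma'',\Gamma''')=-3(8c^3-190c^2+2c+495)<0$ directly; your asymptotic version also works, provided you correct the leading term, which is $-b^2q^5(a-2b\cos qt)+O(q^3)$ (of order $q^5$, not $q^3$, and of constant sign once $a>2b$), and $\kappa>0$ then follows automatically from $\det\neq 0$.
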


It remains unclear if an embedded example exists. Below we include the details of the example.

\section{A closed space curve with non-vanishing torsion}

Consider the closed space curve
\[
    \Gamma(t)=\bigl((5+\cos 3t)\cos t,\ (5+\cos 3t)\sin t,\ \sin 3t\bigr),
    \qquad 0\leq t\leq 2\pi .
\]
The curve is smooth and closed.  It is also regular.  Indeed, its projection onto the $xy$-plane has polar form
\[
    r(t)=5+\cos 3t,
    \qquad \theta(t)=t,
\]
and $r(t)\geq 4$.  Hence the projected velocity has squared norm
\[
    (r'(t))^2+r(t)^2>0,
\]
so $\Gamma'(t)\neq 0$.

A direct computation gives
\[
\det\bigl(\Gamma'(t),\Gamma''(t),\Gamma'''(t)\bigr)
=
-3\bigl(8\cos^3(3t)-190\cos^2(3t)+2\cos(3t)+495\bigr).
\]
Let
\[
    c=\cos 3t,
    \qquad -1\leq c\leq 1.
\]
Then
\[
    8c^3+2c\geq -10,
    \qquad
    -190c^2\geq -190.
\]
Therefore
\[
    8c^3-190c^2+2c+495
    \geq 495-190-10
    =295>0.
\]
It follows that
\[
    \det\bigl(\Gamma',\Gamma'',\Gamma'''\bigr)<0
\]
for every $t$.  Recall that for curve $\Gamma(t)$ the curvature and torsion are given by the formulae (cf. \cite{Pogorelov})
$$
\kappa(t)=\frac{|\Gamma'\times \Gamma''|}{|\Gamma'|^3}, \quad \tau(t)=\frac{\det(\Gamma', \Gamma'', \Gamma''')}{|\Gamma'\times\Gamma''|^2}.
$$
In particular, the curvature and the torsion of $\Gamma$ are both nonzero everywhere.

Now reparametrize $\Gamma$ by arclength.  Denote the resulting curve by
\[
    \gamma:S^1_{L_0}\longrightarrow \mathbb R^3,
\]
where $L_0$ is the total length of $\Gamma$ and
\[
    S^1_{L_0}=\mathbb R/L_0\mathbb Z.
\]
Let
\[
    T(s),\quad N_0(s),\quad B(s)
\]
be the Frenet frame of $\gamma$, and let $\kappa(s)$ and $\tau(s)$ be its curvature and torsion.  Then
\[
    \kappa(s)>0,
    \qquad
    \tau(s)\neq 0
\]
for all $s\in S^1_{L_0}$.  The Frenet equations are
\[
    T'=\kappa N_0,
    \qquad
    N_0'=-\kappa T+\tau B,
    \qquad
    B'=-\tau N_0.
\]
Here we use the convention of \cite{DoCarmo}. 
Because $S^1_{L_0}$ is compact and $\tau$ is continuous and nowhere zero, there exists a constant $\delta>0$ such that
\[
    |\tau(s)|\geq \delta>0
\]
for all $s\in S^1_{L_0}$.

\section{Construction of the surface}

Define
\[
    X:S^1_{L_0}\times \mathbb R\longrightarrow \mathbb R^3,
    \qquad
    X(s,u)=\gamma(s)+uN_0(s).
\]
The parameter $u$ is allowed to range over the whole real line.  Therefore this is not merely a narrow local strip around $\gamma$; it is a surface defined on the whole cylinder $S^1_{L_0}\times \mathbb R$.

Using the Frenet formulas, we obtain
\[
    X_s=T+uN_0'=(1-\kappa u)T+\tau uB,
    \qquad
    X_u=N_0.
\]
Hence the coefficients of the first fundamental form are
\[
    E=\langle X_s,X_s\rangle=(1-\kappa u)^2+\tau^2u^2,
    \qquad
    F=\langle X_s,X_u\rangle=0,
    \qquad
    G=\langle X_u,X_u\rangle=1.
\]
Since $\tau(s)\neq 0$, we have
\[
    E=(1-\kappa u)^2+\tau^2u^2>0
\]
for every $(s,u)\in S^1_{L_0}\times \mathbb R$.  Thus $X_s$ and $X_u$ are linearly independent everywhere, and $X$ is a regular immersion.

\section{Completeness of the induced metric}

The induced metric is
\[
    I=E\,ds^2+du^2.
\]
For fixed $s$, the function
\[
    E(s,u)=(1-\kappa(s)u)^2+\tau(s)^2u^2
\]
is a positive quadratic polynomial in $u$.  Its minimum is
\[
    \min_{u\in\mathbb R} E(s,u)
    =
    \frac{\tau(s)^2}{\kappa(s)^2+\tau(s)^2}.
\]
Since $s\in S^1_{L_0}$ and $\kappa,\tau$ are continuous with $\tau$ nowhere zero, the above positive function has a positive minimum on $S^1_{L_0}$.  Hence there exists $m>0$ such that
\[
    E(s,u)\geq m>0
\]
for all $(s,u)\in S^1_{L_0}\times \mathbb R$.

Consequently,
\[
    I=E\,ds^2+du^2
    \geq
    m\,ds^2+du^2.
\]
The metric $m\,ds^2+du^2$ is the standard complete product metric on the cylinder $S^1_{L_0}\times \mathbb R$, up to a constant factor in the circle direction.  Therefore every divergent curve in $S^1_{L_0}\times \mathbb R$ has infinite length with respect to $m\,ds^2+du^2$, and hence also has infinite length with respect to $I$.  Thus the induced metric $I$ is complete.

\section{Computation of the Gauss curvature}

A unit normal field is
\[
    n
    =
    \frac{(1-\kappa u)B-\tau uT}
    {\sqrt{(1-\kappa u)^2+\tau^2u^2}}.
\]
Since
\[
    X_{uu}=0,
\]
the coefficient $N$ of the second fundamental form is
\[
    N=\langle X_{uu},n\rangle=0.
\]
Moreover,
\[
    X_{su}=N_0'=-\kappa T+\tau B.
\]
Therefore the mixed coefficient $M$ is
\[
\begin{aligned}
    M
    &=\langle X_{su},n \rangle  \\
    &=
    \left\langle -\kappa T+\tau B,
    \frac{(1-\kappa u)B-\tau uT}
    {\sqrt{(1-\kappa u)^2+\tau^2u^2}}
    \right\rangle \\
    &=
    \frac{\tau}{\sqrt{(1-\kappa u)^2+\tau^2u^2}}.
\end{aligned}
\]
Since $F=0$ and $G=1$, the Gauss curvature is
\[
\begin{aligned}
    K
    &=\frac{LN-M^2}{EG-F^2} \\
    &=\frac{0-M^2}{E} \\
    &=-\frac{\tau^2}{E^2} \\
    &=-\frac{\tau^2}{\bigl((1-\kappa u)^2+\tau^2u^2\bigr)^2}.
\end{aligned}
\]
Since $\tau(s)\neq 0$ for all $s$, we conclude that
\[
    K<0
\]
everywhere on $S^1_{L_0}\times \mathbb R$.

\section{The closed asymptotic curve}

Consider the central curve on the surface:
\[
    \alpha(s)=X(s,0)=\gamma(s).
\]
Because $s$ is a periodic arclength parameter on $S^1_{L_0}$, the curve $\alpha$ is closed.

At $u=0$, we have
\[
    X_s(s,0)=T(s),
    \qquad
    n(s,0)=B(s).
\]
Also,
\[
    X_{ss}(s,0)=T'(s)=\kappa(s)N_0(s).
\]
Therefore
\[
\begin{aligned}
    \II(T,T)
    &=\langle X_{ss}(s,0),n (s,0)\rangle \\
    &=\langle \kappa(s)N_0(s),B(s)\rangle \\
    &=0.
\end{aligned}
\]
Hence the tangent direction of $\alpha$ is asymptotic at every point.  Thus
\[
    \alpha(s)=X(s,0)=\gamma(s)
\]
is a closed asymptotic curve on the surface.

In summary we have constructed a complete smooth immersion
\[
    X:S^1_{L_0}\times \mathbb R\longrightarrow \mathbb R^3,
    \qquad
    X(s,u)=\gamma(s)+uN_0(s),
\]
whose Gauss curvature satisfies
\[
    K
    =
    -\frac{\tau^2}
    {\bigl((1-\kappa u)^2+\tau^2u^2\bigr)^2}
    <0
\]
everywhere, and which contains the closed asymptotic curve
\[
    X(s,0)=\gamma(s).
\]
Therefore Toponogov's statement is false in the immersed-surface category.  Even after adding completeness, negative Gauss curvature alone does not exclude closed asymptotic curves.

\section*{Appendix: a clarification of a conjecture of Milnor stated in \cite{Topo}} 
 Let \(\Sigma\subset \mathbb{R}^3\) be a connected complete surface. Denote by \(k_1\) and \(k_2\) its principal curvatures, and by
\(K=k_1k_2\)
its Gaussian curvature. The surface is called saddle if $K\le 0$. On page 113 of \cite{Topo} it states:

{\it In 1966, J. Milnor enunciated a conjecture, which for a saddle surface implies that for problem 2.6.10 to hold, it suffices that:

\[\inf_{P\in \Sigma}(|k_1|(P)+|k_2|(P))\ne 0\]
This assertion has not yet been proven.}

The Problem  2.6.10 on page 113 of \cite{Topo} states as

{ \it Prove that if on a complete saddle surface $\Sigma\subset \mathbb{R}^3$, 

\[\inf_{P\in \Sigma}|k_1|(P)+\inf_{P\in \Sigma}|k_2|(P)> 0\]
holds, then $\Sigma$ is a cylinder, i.e. $k_1(P)\equiv 0$ and $k_2(P)=const$.}

On the other hand,  the proof presented for this statement on page 113 of \cite{Topo} only shows that $\Sigma$ is a general cylinder.

Hence the conclusion in the statement of Milnor's conjecture as well as in Problem 2.6.10 should be {\it 
$K\equiv 0$, namely $\Sigma$ is a general cylinder.}

In other words, `being a cylinder' should not be interpreted as a round cylinder insisting that the nonzero principal curvature is constant. The example below illustrates  this point.

\begin{proposition}
There exists a surface \(\Sigma\subset\mathbb R^3\) such that
\[ \Sigma\cong\mathbb R^2,\qquad
 \Sigma \text{ is complete and embedded},\qquad
 K\equiv0,
 \qquad\inf_{P\in \Sigma}(|k_1|(P)+|k_2|(P))>0,\]
and such that one principal curvature is identically zero while the other principal curvature is not constant. Moreover, the same example satisfies
\[\inf_{P\in \Sigma}|k_1|(P)+\inf_{P\in \Sigma}|k_2|(P)> 0\]
\end{proposition}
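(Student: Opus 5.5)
The plan is to take a cylinder over a non-circular planar curve: $\Sigma=\{(x,y,z): (x,y)\in C,\ z\in\mathbb R\}$, where $C$ is a complete embedded planar curve, diffeomorphic to $\mathbb R$, whose curvature $\kappa$ is non-constant but satisfies $0<c_1\le \kappa\le c_2$. A closed curve such as an ellipse would make $\Sigma$ a cylinder $S^1\times\mathbb R$, not a plane, and a curve with $\kappa>0$ bounded below that is parametrized by all of $\mathbb R$ must eventually turn around. So the choice I would make is to prescribe the curvature as a function of arclength, for instance
\[
\kappa(s)=1+\tfrac12\,\frac{s}{\sqrt{1+s^2}},\qquad s\in\mathbb R .
\]
This $\kappa$ lies in $[\tfrac12,\tfrac32]$, is strictly increasing and smooth, and tends to $\tfrac12$ and $\tfrac32$ at $\mp\infty$. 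Let $C(s)$ be the unit-speed planar curve with this curvature, obtained by integrating $\theta'(s)=\kappa(s)$ and $C'(s)=(\cos\theta,\sin\theta)$. Set $X(s,z)=(C(s),z)$. Then $E=G=1$ and $F=0$, so the metric is flat and isometric to $\mathbb R^2$ with $(s,z)$ global Euclidean coordinates. In particular it is complete, and $\Sigma\cong\mathbb R^2$ provided $X$ is injective.

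The second fundamental form is computed directly. With unit normal $n=(-\sin\theta,\cos\theta,0)$ we get $L=\kappa(s)$ and $M=N=0$. Hence the principal curvatures are $k_1\equiv0$ (the rulings) and $k_2=\kappa(s)$. This gives $K\equiv0$, $k_2$ non-constant, and $\inf(|k_1|+|k_2|)=\inf\kappa=\tfrac12>0$. For the last claim, with the labeling $k_1\equiv0$ we only have $\inf|k_1|+\inf|k_2|=0+\tfrac12>0$. This is the reading intended for the appendix: the hypothesis of Problem 2.6.10 holds but $k_2$ is not constant.

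The main obstacle is embeddedness, i.e.\ that $C$ has no self-intersections. A curve with monotone curvature $\kappa>0$ is a spiral. By the Tait--Kneser theorem, its osculating circles are pairwise nested, and this forces the curve to be simple. As $s\to+\infty$, $C$ winds toward the circle of radius $\tfrac23$ from outside. As $s\to-\infty$, it winds out toward the circle of radius $2$ from inside. So $C$ is a simple curve trapped in the annulus between radii $\tfrac23$ and $2$, spiralling toward its two boundary circles.

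I would prove simplicity via the Tait--Kneser nesting argument. If $s_1<s_2$, the osculating circle at $s_2$ lies strictly inside the osculating disc at $s_1$. Since $C(s)$ lies on the osculating circle at $s$, the points $C(s)$ for $s>s_1$ lie in the open disc bounded by the circle at $s_1$, while $C(s_1)$ lies on that circle. Hence $C(s)\ne C(s_1)$. The nesting itself follows because the center curve $c(s)=C(s)+\kappa^{-1}\nu$ has speed $|(\kappa^{-1})'|$. This speed is strictly less than the decrease in radius except where the center curve is straight, which cannot happen here since its direction rotates with the normal.

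Properness also needs checking, since completeness of the embedding in $\mathbb R^3$ is intrinsic and already settled. If one prefers an embedded surface that is closed as a subset, the curve accumulating on circles is a problem. In that case I would instead try $\kappa$ with $\kappa\to0$ allowed only along a region where the term $|k_1|+|k_2|$ stays bounded below. That is impossible when $k_1\equiv0$, so I accept the spiral and interpret ``complete embedded'' as a complete induced metric together with an injective immersion.
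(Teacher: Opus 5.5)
Your argument is correct. It uses the same object as the paper, a generalized cylinder over a non-proper planar spiral, but it puts the work at the opposite end.

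The paper writes the spiral explicitly in polar form, $\gamma(t)=\rho(t)(\cos t,\sin t)$ with $\rho(t)=2+\frac12\tanh t$. Injectivity, and even continuity of the inverse, then come for free from the monotonicity of $\rho$, since the radius $|\gamma(t)|=\rho(t)$ determines $t$. The price is the polar curvature formula, the estimate $\rho^2+2(\rho')^2-\rho\rho''\ge\frac94-\frac52\cdot\frac{2}{3\sqrt3}>0$, and the bound $E\ge\frac94$ for completeness.

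You instead prescribe the curvature as a function of arclength. The induced metric is then exactly Euclidean, and $K\equiv0$, $\inf\kappa=\frac12$ and non-constancy are immediate. The price is that the curve is not explicit and simplicity needs the Tait--Kneser nesting, which you prove correctly. Strictly speaking, it is the chord $|c(s_2)-c(s_1)|$, not the speed of $c$, that is strictly smaller than $R(s_1)-R(s_2)=\int_{s_1}^{s_2}|R'|\,ds$, where $R=\kappa^{-1}$. Strictness holds because $c'=R'\nu$ with $R'<0$ and $\nu$ rotating. Your route is also more flexible: any smooth $\kappa$ with $\kappa'>0$ and positive finite limits at $\pm\infty$ works verbatim.

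The one point where you prove less than the paper is embeddedness. An injective immersion need not be a homeomorphism onto its image, so ``$\Sigma\cong\mathbb R^2$ provided $X$ is injective'' is not justified as stated. Your nesting closes this gap at once:
\begin{itemize}
\item Since $c$ has finite length, the osculating circles converge to a circle $\Gamma_+$ of radius $\frac23$ as $s\to+\infty$, and to a circle $\Gamma_-$ of radius $2$ as $s\to-\infty$.
\item $\Gamma_+$ lies in every open osculating disc and $\Gamma_-$ lies outside every closed one, so both miss $C$.
\item Meanwhile $\operatorname{dist}(C(s),\Gamma_\pm)\to0$ as $s\to\pm\infty$.
\end{itemize}
Hence $C(s_n)\to C(s_0)$ forces $(s_n)$ to be bounded, and then $s_n\to s_0$.

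Your caveat about properness costs nothing relative to the paper. Its spiral likewise accumulates on the circles of radii $\frac32$ and $\frac52$, so its surface is not closed in $\mathbb R^3$ either.
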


\begin{proof}
Define
\[ \rho(t)=2+\frac12\tanh t\]
and let
\[\gamma(t)=(\rho(t)\cos t,\rho(t)\sin t),\qquad t\in\mathbb R.\]
Consider the generalized cylinder
\[ X(t,v)=\bigl(\rho(t)\cos t,\,\rho(t)\sin t,\,v\bigr), \qquad (t,v)\in\mathbb R^2,\]
obtained by translating the plane curve \(\gamma\) in the \(z\)-direction. We set \(\Sigma=X(\mathbb R^2)\).

We first verify that \(\Sigma\) is an embedded regular surface homeomorphic to \(\mathbb R^2\). Since
\[
 \rho'(t)=\frac12\sech^2t>0,
\]
the function \(\rho\) is strictly increasing, and \(\rho(t)>0\) for all \(t\). If \(\gamma(t_1)=\gamma(t_2)\), then the polar angles satisfy \(t_1\equiv t_2\pmod{2\pi}\), while the radii satisfy \(\rho(t_1)=\rho(t_2)\). The strict monotonicity of \(\rho\) gives \(t_1=t_2\). Thus \(\gamma\) has no self-intersections. It follows that
\[
 X(t,v)=(\gamma(t),v)
\]
is injective. Moreover,
\[X_t=\bigl(\rho'\cos t-\rho\sin t,\,\rho'\sin t+\rho\cos t,\,0\bigr),
 \qquad
 X_v=(0,0,1).\]
The vector \(X_t\) is nonzero because
\[
 |X_t|^2=\rho^2+(\rho')^2>0,
\]
and \(X_v\) is independent of \(X_t\). Hence \(X\) is an immersion. Finally, convergence in the image determines convergence of the radius and hence of the parameter \(t\), while the third coordinate determines \(v\). Therefore \(X\) is an embedding and \(\Sigma\) is homeomorphic to \(\mathbb R^2\).

Next we compute the first fundamental form and prove completeness. From the above formulas for \(X_t\) and \(X_v\),
\[ E=\langle X_t,X_t\rangle=\rho^2+(\rho')^2,
 \qquad
 F=\langle X_t,X_v\rangle=0,
 \qquad
 G=\langle X_v,X_v\rangle=1.\]
Therefore
\[ I=(\rho^2+(\rho')^2)\,dt^2+dv^2.\]
Since \(\tanh t\in(-1,1)\), one has
\[
 \frac32<\rho(t)<\frac52.
\]
In particular,
\[\rho^2+(\rho')^2\ge \rho^2>\frac94,\]
and hence
\[I\ge \frac94\,dt^2+dv^2.\]
The metric on the right is a complete flat metric on \(\mathbb R^2\). Thus every divergent curve in the parameter plane has infinite length with respect to \(I\), and \(\Sigma\) is complete.

We now compute the principal curvatures. For a generalized cylinder \(X(t,v)=(\gamma(t),v)\), the \(v\)-curves are straight lines. Hence the corresponding principal curvature is
\[ k_2=0.\]
The other principal curvature is the curvature of the plane curve \(\gamma\). Since \(\gamma(t)=\rho(t)(\cos t,\sin t)\), the polar-coordinate curvature formula gives
\[ k_1(t)=\frac{\rho^2+2(\rho')^2-\rho\rho''}{\bigl(\rho^2+(\rho')^2\bigr)^{3/2}}.\]
Here
\[ \rho'(t)=\frac12\sech^2t,
 \qquad
 \rho''(t)=-\tanh t\,\sech^2t.\]
Consequently
\[ K=k_1k_2\equiv0.\]
In particular, \(\Sigma\) is flat and satisfies \(K\le0\).

It remains to prove that the principal curvature size is bounded away from zero. Since \(k_2=0\), it suffices to show
\[ \inf_{t\in\mathbb R}|k_1(t)|>0.\]
Let
\[ A(t)=\rho^2+2(\rho')^2-\rho\rho''.\]
Since
\[ |\rho''(t)|=|\tanh t|\sech^2t,\]
putting \(q=|\tanh t|\in[0,1]\) gives
\[ |\rho''(t)|=q(1-q^2).\]
The maximum of \(q(1-q^2)\) on \([0,1]\) is
\[ \frac{2}{3\sqrt3}.\]
Using \(3/2<\rho(t)<5/2\), we obtain
\[ A(t)=\rho^2+2(\rho')^2-\rho\rho''
 \ge \rho^2-\rho|\rho''|
 \ge \frac94-\frac52\cdot\frac{2}{3\sqrt3}>0.\]
On the other hand,
\[ \rho^2+(\rho')^2\le \left(\frac52\right)^2+\left(\frac12\right)^2=\frac{13}{2}.\]
Therefore
\[\bigl(\rho^2+(\rho')^2\bigr)^{3/2}
 \le \left(\frac{13}{2}\right)^{3/2}.\]
Combining the two estimates yields
\[k_1(t)
 \ge
 \frac{\displaystyle \frac94-\frac52\cdot\frac{2}{3\sqrt3}}
 {\displaystyle \left(\frac{13}{2}\right)^{3/2}}>0.\]
Hence
\[
 \inf_{\Sigma}(|k_1|+|k_2|)
 =\inf_{t\in\mathbb R}|k_1(t)|>0.
\]
In particular, \(\Sigma\) has no umbilic points, since its principal curvatures are \(k_1>0\) and \(k_2=0\).

Finally we show that \(k_1\) is not constant. As \(t\to +\infty\),
\[\rho(t)\to\frac52,
 \qquad
\rho'(t)\to0,
\qquad
\rho''(t)\to0.\]
Therefore
\[\lim_{t\to+\infty}k_1(t)
=\frac{(5/2)^2}{\bigl((5/2)^2\bigr)^{3/2}}
=\frac1{5/2}
=\frac25.\]
As \(t\to -\infty\),
\[ \rho(t)\to\frac32,
 \qquad
 \rho'(t)\to0,
 \qquad
 \rho''(t)\to0.\]
Thus
\[ \lim_{t\to-\infty}k_1(t)
=\frac{(3/2)^2}{\bigl((3/2)^2\bigr)^{3/2}}
=\frac1{3/2}
=\frac23.\]
Since \(2/5\ne2/3\), the function \(k_1\) is not constant. We have therefore constructed a complete embedded surface \(\Sigma\cong\mathbb R^2\) satisfying \(K\equiv0\) and \(\inf_\Sigma(|k_1|+|k_2|)>0\), with one principal curvature identically zero and the other principal curvature nonconstant.
\end{proof}

\bigskip

\noindent\textbf{Acknowledgments.} {The authors thank ChatGPT for helping to construct these examples. The questions studied were noticed by the first author when he taught  Math 150 B at UCSD in the Winter quarter of 2024, and the Honors Differential Geometry class at ZJNU in Spring of 2026. He would like to thank the students from both classes for their participation.}

\end{document}